
\documentclass{article}%
\usepackage{amssymb}
\usepackage{amsfonts}
\usepackage{amsmath}
\usepackage{amsthm}
\usepackage{graphicx}%
\setcounter{MaxMatrixCols}{30}
%TCIDATA{OutputFilter=latex2.dll}
%TCIDATA{Version=5.00.0.2552}
%TCIDATA{CSTFile=40 LaTeX article.cst}
%TCIDATA{Created=Monday, October 31, 2016 08:37:02}
%TCIDATA{LastRevised=Monday, July 09, 2018 09:00:40}
%TCIDATA{<META NAME="GraphicsSave" CONTENT="32">}
%TCIDATA{<META NAME="SaveForMode" CONTENT="1">}
%TCIDATA{<META NAME="DocumentShell" CONTENT="Standard LaTeX\Blank - Standard LaTeX Article">}
\newtheorem{theorem}{Theorem}

\input{Macros.tex}
\iffalse
\newenvironment{proof}[1][Proof]{\noindent\textbf{#1.} }{\ \rule{0.5em}{0.5em}}

\fi
\nc{\IR}{\mathbb{R}}
\nc{\IZ}{\mathbb{Z}}

\begin{document}

\title{Farthest points on flat surfaces}
\author{Jo\"{e}l Rouyer
\and Costin V\^{\i}lcu}
\maketitle

%%%%%%%%%%%%%%%%%%%%%%%%%%%%%%%%%%%%%%%%%%%%%%%%%%%%

%%%%%%%%%%%%%%%%%%%%%%%%%%%%%%%%%%%%%%%%%%%%%%%%%%%%%
%\nc{\E}{\mathbb{E}}

\section{Introduction}

This note is an elementary example of the reciprocal influence between Topology and Geometry. A well-known result in this direction is the celebrated
Gauss-Bonnet theorem, which easily implies that the only flat Riemannian surfaces are flat tori and flat Klein bottles. (All \emph{surfaces} considered
here are compact and without boundary.) We consider the distance function from an arbitrary point $p$ on such a surface, and are interested in the set
$F_{p}$ of all \emph{farthest points} (\ie, points at maximal distance) from $p$. Denote by $F$ the (multi-valued) mapping thus defined.

Hugo Steinhauss asked for characterizations of the sets of farthest points on convex surfaces, see Section A35 in \cite{cfg}. In particular, he asked if the
spheres are the only convex surfaces for which $F$ is a single-valued involution. A series of several papers, starting with \cite{z-q}, answered
Steinhauss' questions. The survey \cite{v2}, though a little dated, is still a reference in the field. It should be noted that few examples of explicit
computations of farthest points are known, see \cite{JR-JG} or even \cite{NN}, and the counterexamples to the mentioned conjecture of Steinhauss 
\cite{v-d}, \cite{vz}, \cite{IRV1}, \cite{IV}. On the other hand, no convex polyhedron has $F$ a single-valued involution \cite{JR}.

A motivation for this note lies in the application of Baire Categories theorem to the study of farthest points on Alexandrov surfaces, initiated in
\cite{vz2} and continued in \cite{RV_FPAS}. Roughly speaking, an Alexandrov surface with curvature bounded below by $\kappa$ is a $2$-dimensional
topological manifold endowed with an intrinsic metric which verifies Toponogov's comparison property. For the precise definition and basic
properties of Alexandrov surfaces, see for example \cite{BGP}, \cite{Sh} or, closer to our topic, \cite{ST}. 
It is easy to see that convex surfaces are examples of Alexandrov surfaces with curvature bounded below by $0$. Conversely, any such Alexandrov space is
isometric to a (possibly degenerate) convex surface whenever it is homeomorphic to the sphere.

Consider the space $\mathcal{A}(\kappa)$ of all Alexandrov surfaces, with the topology induced by the Gromov-Hausdorff metric. Two surfaces lie in the same
connected component of $\mathcal{A}(\kappa)$ if and only if they are homeomorphic to each other \cite{RV2}. Hence convex surfaces form a connected
component of $\mathcal{A}(0)$.

By a variant of the Gauss-Bonnet theorem, any topological torus or Klein bottle in $\mathcal{A}(0)$ is (Riemannian and) flat. So two connected
components of $\mathcal{A}(0)$ contain only flat surfaces.

In \cite{RV_FPAS} we extend Tudor Zamfirescu's results in \cite{Z2}, \cite{z-ep}, and show that on \textit{most} (in the sense of Baire categories)
Alexandrov surfaces outside the aforementioned components, most points have a unique farthest point. The cases of the connected components of flat surfaces, namely
the components of $\mathcal{A}(0)$ containing flat tori and flat Klein bottles respectively, are treated in this note with elementary techniques. Somehow
intriguingly, at least at a first glance, flat tori behave differently with this respect from flat Klein bottles, and both classes of flat surfaces behave
differently from all other Alexandrov surfaces.

It is well known that a flat surface can be obtained as the quotient of the Euclidean plane $\mathbb{E}$ by a discrete group of isometries $\Lambda$.
Alternatively, it can be obtained identifying opposite sides of a parallelogram $P$; the correspondence between those constructions is almost obvious. 
What is a little less known is that, for Klein bottles, $P$ can be chosen to be a rectangle without loss of generality, while for tori, its side
lengths and angles may be asked to satisfy a certain inequality. 
For aim of completeness, before determining their farthest points, we reprove the classification of flat surfaces.

On a flat surface $S=\mathbb{E}/\Lambda$, the \emph{cut locus} $C(p)$ of a point $p$ (see any book of Riemannian geometry for the general definition) is
the the set of all points in $S$ joined to $p$ by at least two \emph{segments} (\ie, shortest paths). Hence it is the image under the canonical surjection of
the Voronoi diagram of $p$, seen as a set of points in $\mathbb{E}$. 
In particular, the cut loci on flat surfaces are graphs without extremities, whose edges are segments. 
Moreover, one can easily see that a farthest point from $p$ is necessarily a vertex (ramification point) of $C(p)$. 
Recall that the \emph{Voronoi diagram} of a discrete set of \emph{sites} (\ie, fixed points) $S\subset \mathbb{E}$ is the union of the boundaries of the cells $V_{s}$ ($s\in S$)
consisting of the points of $\mathbb{E}$ that are closer to $s$ than to any other site in $S$.

We shall denote by $F_{p}^{n}$ the set of farthest points from $p$ which are joined to $p$ by exactly $n$ segments
and by $\#S$ the cardinality of the set $S$.

%%%%%%%%%%%%%%%%%%%%%%%%%%%%%%%%%%%%%%%%%%%%%%%%%%%%%

\section{Flat tori}

Denote by $T_{a,b,\alpha}$ the torus obtained (by the usual identification)
from a parallelogram of side lengths $a$ and $b$ and angle $\alpha\in
]0,\frac{\pi}{2}]$ between them.

\begin{theorem}
Any flat torus is isometric to some $T_{a,b,\alpha}$ with $a$, $b$, $\alpha$ satisfying
\begin{equation}
2b\cos\alpha\leq a\leq b\text{.} \label{1}%
\end{equation}

\end{theorem}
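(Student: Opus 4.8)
The plan is to realize the torus as a Euclidean quotient and to recognize the statement as a lattice-reduction result. Since a flat torus is orientable with fundamental group $\mathbb{Z}^{2}$, its deck group contains no orientation-reversing isometry and no isometry with fixed points, so it is a rank-two lattice of translations $\Lambda$, and the torus is $\mathbb{E}/\Lambda$. Two such lattices give isometric tori exactly when they agree up to an element of $O(2)$ (translations of $\mathbb{E}$ being immaterial for a lattice). Thus it suffices to produce, for each $\Lambda$, a basis $\{u,v\}$ whose side lengths $a=|u|\le b=|v|$ and angle $\alpha=\angle(u,v)\in\,]0,\frac{\pi}{2}]$ obey \eqref{1}: the parallelogram on $\{u,v\}$ is then a fundamental domain, and gluing its opposite sides returns $\mathbb{E}/\Lambda$ as $T_{a,b,\alpha}$.

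First I would take $u\in\Lambda\setminus\{0\}$ of minimal length $a=|u|$, which exists by discreteness and is necessarily primitive. Orthogonally projecting $\Lambda$ onto the line $u^{\perp}$ gives a rank-one lattice (the kernel of the projection being $\Lambda\cap\mathbb{R}u=\mathbb{Z}u$, by minimality of $u$); I let $v\in\Lambda$ be a shortest vector projecting to a generator of this rank-one lattice and put $b=|v|$. Then $\{u,v\}$ is a basis of $\Lambda$: any $w\in\Lambda$ projects onto an integer multiple $m\,\pi(v)$ of the chosen generator, whence $w-mv$ lies in the kernel $\mathbb{Z}u$ and $w\in\mathbb{Z}u+\mathbb{Z}v$. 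Since $v\neq 0$ forces $|v|\ge|u|$, we also have $a\le b$.

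To pin down the angle, note that among the lifts $\{v+ku:k\in\mathbb{Z}\}$ of the chosen generator the shortest one — which $v$ is — has its component along $u$ of length at most $\frac{1}{2}|u|$, i.e. $|\langle u,v\rangle|\le\frac{1}{2}a^{2}$. After possibly replacing $v$ by $-v$ we may assume $\langle u,v\rangle\ge 0$, so $\alpha\in\,]0,\frac{\pi}{2}]$ and $ab\cos\alpha=\langle u,v\rangle\le\frac{1}{2}a^{2}$, that is $2b\cos\alpha\le a$. Combined with $a\le b$ this is precisely \eqref{1}.

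The routine ingredients are the existence of a shortest vector, the bound on $\langle u,v\rangle$, and the final passage from the reduced basis to $T_{a,b,\alpha}$. I expect the main obstacle to be verifying that $\{u,v\}$ is genuinely a \emph{basis} of $\Lambda$ and not merely an independent pair; the quotient-by-projection argument above is the clean way to secure this and to guarantee that the relevant projected lattice is honestly discrete (which rests on $u$ being primitive), thereby side-stepping the tedious coefficient-rounding case analysis.
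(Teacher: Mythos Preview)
Your argument is correct and is the classical Gauss--Lagrange reduction of a rank-two lattice: pass to the universal cover, identify the deck group with a lattice $\Lambda\subset\mathbb{E}$, pick a shortest nonzero $u\in\Lambda$, and choose $v$ as a shortest lift of a generator of $\pi(\Lambda)\subset u^{\perp}$; the bound $|\langle u,v\rangle|\le\frac12|u|^{2}$ then yields \eqref{1}. The one step you flag --- that $\{u,v\}$ is an honest basis and that $\pi(\Lambda)$ is discrete --- is handled cleanly by your kernel/projection argument.

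The paper, however, takes a genuinely different route. It works intrinsically on the torus rather than in the cover: it chooses a shortest non-contractible closed geodesic $\gamma_{0}$ (length $a$), thickens it into a flat cylindrical collar, and lets the collar grow until its two boundary geodesics first meet. Orientability is used to rule out a M\"obius-type self-gluing of one boundary component, forcing the two boundary circles to be identified by a rotation; the rotation parameter $d\in[0,a/2]$ and the collar half-width $c$ then produce the second side $b=\sqrt{d^{2}+4c^{2}}$ and the angle $\alpha=\arccos(d/b)$. What your approach buys is brevity and a direct link to the standard moduli-of-lattices picture; what the paper's approach buys is a uniform template that it reuses verbatim for the Klein bottle classification (Theorem~3), where the deck group is no longer a lattice of translations and your projection argument would need modification.
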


\begin{proof}
Let $T$ be a flat torus. The shortest non-contractible closed curve $\gamma_{0}$ is a simple closed geodesic; denote by $a$ its length. 
For $\varepsilon>0$ small enough, the $\varepsilon$-neighborhood $N_{\varepsilon}$ of $\gamma_{0}$ is a flat cylinder. 
The boundary of the strip has two connected components, which are simple closed geodesics on $T$ of length $a$.
Let $\beta_{\varepsilon}^{0},\beta_{\varepsilon}^{1}:\mathbb{R}/a\mathbb{Z} \rightarrow K$ be their arc-length parametrizations. 
Since $T$ has finite area, for greater $\varepsilon$ the strip must self overlap (\ie, 
$\mathrm{Area}\left(  N_{\varepsilon}\right)  = 2a\varepsilon -\mathrm{Area}\left( \mathrm{overlaying }  \right) < 2a\varepsilon$); 
let $c$ be the greatest $\varepsilon$ such that it does not. 
There are distinct points $t_{0}$, $t_{1}\in\mathbb{R}/a\mathbb{Z}$ such that 
$\beta_{c}^{u_{0}}\left( t_{0}\right)  =\beta_{c}^{u_{1}}\left(  t_{1}\right)  $, 
for some $u_{0}$, $u_{1}$ taking values in $\{0,1\}$. Moreover there exists $s=\pm1$ such that
\begin{equation}
\dot{\beta}_{c}^{u_{0}}\left(  t_{0}\right)  =s\dot{\beta}_{c}^{u_{1}}\left(
t_{1}\right)  \text{,} \label{3}%
\end{equation}
for otherwise $\beta_{\varepsilon}^{u_{0}}$ and $\beta_{\varepsilon}^{u_{1}}$
would have intersected for lower $\varepsilon<c$.

Assume first that $u_{0}=u_{1}=0$ (the case $u_{0}=u_{1}=1$ is obviously
similar). Because of (\ref{3}), the lowest $\varepsilon$ such that
$\beta_{\varepsilon}\left(  t\right)  $ has more that one preimage in
$\mathbb{R}/2a\mathbb{Z}$ does not depend on $t$ and equals $c$. Moreover,
since the running direction of $\beta_{c}^{0}$ cannot change all of sudden,
$s=1$. Hence $\beta_{c}^{0}\left(  t_{0}+t\right)  =\beta_{c}^{0}\left(
t_{1}+t\right)  $; it follows that $\left\vert t_{0}-t_{1}\right\vert =a/2$
and a neighborhood of $\beta_{c}^{0}$ is isometric to a M\"{o}bius band, in
contradiction with the fact that $T$ is orientable.

Hence $u_{0}\neq u_{1}$; assume $u_{0}=0$, $u_{1}=1$ and set $\beta
_{-\varepsilon}^{0}=\beta_{\varepsilon}^{1}$, $\varepsilon\in\left[
0,c\right]  $. One can assume without loss of generality that $\beta
_{\varepsilon}^{u}$ is parametrized in such a way that $\tau:\left[
-c,c\right]  \rightarrow T$, $\varepsilon\mapsto\beta_{\varepsilon}^{0}\left(
0\right)  $ is a geodesic normal to $\beta_{0}^{0}$ and $\dot{\beta
}_{\varepsilon}^{0}\left(  0\right)  $ is parallel along $\tau$. By (\ref{3}),
$\beta_{c}^{0}$ and $\beta_{c}^{1}$ are two parametrizations of the same
geodesic, that is
\begin{equation}
\beta_{c}^{1}\left(  t\right)  =\beta_{c}^{0}\left(  \pm t+d\right)  \text{.}
\label{4}%
\end{equation}
Hence $T$ is the cylinder $C\overset{\mathrm{def}}{=}\operatorname{Im}%
\gamma_{0}\times\operatorname{Im}\tau$ whose boundary points are pairwise
identified: $\left(  \gamma_{0}\left(  t\right)  ,\tau\left(  -\varepsilon
\right)  \right)  $ with $\left(  \gamma_{0}\left(  \pm t+d\right)
,\tau\left(  \varepsilon\right)  \right)  $. Since $T$ is orientable, the sign
of $t$ has to be plus. Reversing if necessary the running direction of all
geodesics $\beta_{\varepsilon}^{u}$, one can assume without loss of generality
that $d\in\lbrack0,a/2]$. The line segment of $C$ joining $\beta_{c}%
^{1}\left(  0\right)  $ and $\beta_{c}^{0}\left(  d\right)  $ provides a close
geodesic of $T$ of length $b\overset{\mathrm{def}}{=}\sqrt{d^{2}+4c^{2}}$.
Cutting $C$ along this line provides a parallelogram with side lengths $a$,
$b\geq a$ (because $\gamma_{0}$ is a shortest closed geodesic) and angle
\[
\alpha=\arccos\frac{d}{b}\geq\arccos\frac{a}{2b}\text{.}%
\]

\end{proof}

\begin{theorem}
Let $p$ be a point of the flat torus $T_{a,b,\alpha}$ (with $2b\cos\alpha\leq
a\leq b$). If $\alpha=\pi/2$ then $\#F_{p}=\#F_{p}^{4}=1$, else $\#F_{p}%
=\#F_{p}^{3}=2$.
\end{theorem}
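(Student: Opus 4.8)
The plan is to combine the homogeneity of the torus with the two facts recalled in the introduction: that the cut locus $C(p)$ is the image of the Voronoi diagram of the lattice $\Lambda$ of lifts of $p$, and that every farthest point is a (ramification) vertex of $C(p)$. Since the translations of $\mathbb{E}$ descend to isometries of $T_{a,b,\alpha}$ acting transitively, I may assume $p$ is the image of the origin $O$, so that $\Lambda$ is generated by $u=(a,0)$ and $w=(b\cos\alpha,b\sin\alpha)$ and $d(p,\cdot)$ is the distance to the nearest point of $\Lambda$. This distance is maximized precisely at the Voronoi vertices, so $F_p$ is contained in the finitely many vertices of $C(p)$, and it suffices to locate those vertices, compute their distance to $p$, and count the segments reaching them.

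First I would pin down the Voronoi vertices. The two diagonals of the fundamental parallelogram $O,u,u+w,w$ have lengths $|u\pm w|$; since $\alpha\le\pi/2$ the diagonal $[u,w]$ is the shorter, so the natural Delaunay triangles are $O,u,w$ and $u,w,u+w$, with circumcenters $c_1,c_2$ as candidate Voronoi vertices. The crux, and the main obstacle of the proof, is to show these are genuine Voronoi vertices, i.e.\ that the circumcircle of $O,u,w$ encloses no further lattice point. This is exactly where inequality~(\ref{1}) enters. I would establish it by checking that under $2b\cos\alpha\le a\le b$ the superbase $(u,\,w-u,\,-w)$ is obtuse, i.e.\ its three pairwise inner products $u\cdot(w-u)=a(b\cos\alpha-a)$, $u\cdot(-w)=-ab\cos\alpha$ and $(w-u)\cdot(-w)=b(a\cos\alpha-b)$ are all $\le 0$; by Selling reduction this forces the Voronoi cell of $O$ to be cut out by the bisectors of $\pm u,\pm w,\pm(w-u)$, whose only vertices in one period are $c_1$ and $c_2$.

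The combinatorial type is then read off from these same inner products. The first and third are strictly negative throughout the range (the first because $b\cos\alpha\le a/2<a$, the third because $a\cos\alpha\le a\le b$ with equality excluded), while the middle one, $-ab\cos\alpha$, vanishes exactly when $\alpha=\pi/2$. Hence for $\alpha<\pi/2$ the superbase is strictly obtuse, the Voronoi cell is a nondegenerate hexagon, and $C(p)$ has two distinct trivalent vertices $\bar c_1,\bar c_2$ (the Euler relation for the induced hexagonal tiling of the torus gives $V=2$, $E=3$); whereas for $\alpha=\pi/2$ one pair of edges collapses, the cell degenerates to the rectangle $[-a/2,a/2]\times[-b/2,b/2]$, its four corners are identified, and $c_1=c_2=(a/2,b/2)$ becomes a single $4$-valent vertex.

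Finally I would show every cut-locus vertex is actually farthest and count its segments. The central symmetry $x\mapsto u+w-x$ carries $\{O,u,w\}$ onto $\{u+w,w,u\}$, hence $c_1$ onto $c_2$, preserving the common circumradius $R$; since $c_1$ is equidistant-nearest to $O,u,w$ and $c_2$ to $u,w,u+w$, both project to points at distance exactly $R$ from $p$. As $F_p$ is nonempty and contained in $\{\bar c_1,\bar c_2\}$, the value $R$ is the maximal distance and every vertex is farthest. A vertex equidistant from $k$ nearest lattice points is joined to $p$ by exactly $k$ segments, one per such point; thus $\alpha=\pi/2$ gives $\#F_p=\#F_p^4=1$ and $\alpha<\pi/2$ gives $\#F_p=\#F_p^3=2$, as claimed.
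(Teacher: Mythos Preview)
Your argument is correct and reaches the same conclusion as the paper, but the route is genuinely different. The paper never invokes Selling reduction or superbases; instead it shows by a direct, elementary case analysis (partitioning the lattice points outside the fundamental parallelogram $P$ into zones A, A$'$, B, B$'$, C, C$'$, D, D$'$, H and checking each one against inequality~(\ref{1})) that the Voronoi diagram of the full lattice, restricted to $P$, coincides with the Voronoi diagram of just the four vertices of $P$. From there the vertices are visibly the two circumcenters $c_1,c_2$ and the rest follows as in your last paragraph. Your approach replaces that ad hoc zone check by the structural observation that $(u,\,w-u,\,-w)$ is an obtuse superbase precisely under~(\ref{1}), which immediately pins down the Voronoi hexagon and its degeneration at $\alpha=\pi/2$; this is cleaner and explains \emph{why} the inequality is the right normalization, at the cost of importing a piece of lattice theory the paper deliberately keeps out. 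Your use of the central symmetry $x\mapsto u+w-x$ to get $d(p,\bar c_1)=d(p,\bar c_2)$ and the Euler count $V-E+F=0$ with $F=1$, $E=3$ to get two distinct trivalent vertices are nice touches absent from the paper, which simply reads the picture.
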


\begin{proof}
$T_{a,b,\alpha}$ is isometric to the standard plane $\mathbb{R}^{2}$
quotiented by the group of translations generated by the vectors
$u\overset{\mathrm{def}}{=}(a,0)$ and $v\overset{\mathrm{def}}{=}(b\cos
\alpha,b\sin\alpha)$. Let $\phi:\mathbb{R}^{2}\rightarrow T_{a,b,\alpha}$ be
the canonical surjection and fix as origin $o=\left(  0,0\right)  $ of
$\mathbb{R}^{2}$ a point in $\phi^{-1}(p)$.

Put $S\overset{\mathrm{def}}{=}(a,0)\mathbb{Z}+(b\cos\alpha,b\sin
\alpha)\mathbb{Z}\subset\mathbb{R}^{2}$. We have $\phi^{-1}(p)=S$ and the cut
locus of $p$ is the image under $\phi$ of the Voronoi diagram of $S$.

If $\alpha=\pi/2$, the Voronoi diagram is a regular rectangular tiling of the
plane, and the conclusion follows immediately. So we may assume, from now on,
that $\alpha<\pi/2$; consequently, the tiles are non-rectangular parallelograms.

\begin{figure}[ptb]
\centering\includegraphics[width=.5\textwidth]{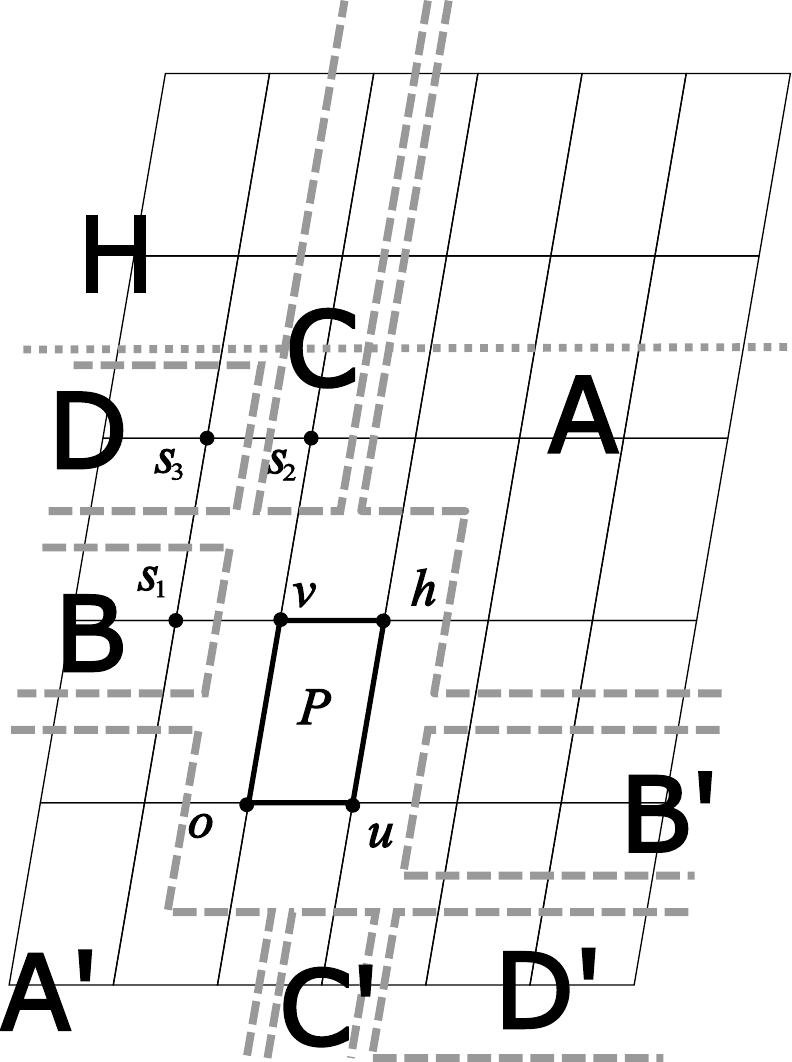} \caption{Tiling of
the plane generated by a flat torus $T_{a,b,\alpha}$.}%
\label{F1}%
\end{figure}

We claim that the restriction to a parallelogram $P$ of the Voronoi diagram of $S$ is the same as the Voronoi diagram of the four vertices of $P$. 
For this, it is sufficient to prove that for any point $x\in P$ and any $s\in S$ which is not a vertex of $P$, there is a vertex $w$ of $P$ such that 
$||s-x||>||w-x||$. 
Choose, for instance, the parallelogram $P$ whose vertices are
$o\overset{\mathrm{def}}{=}(0,0)$, $u\overset{\mathrm{def}}{=}(a,0)$,
$v\overset{\mathrm{def}}{=}(b\cos\alpha,b\sin\alpha)$ and $h\overset
{\mathrm{def}}{=}u+v$. Put $s_{1}\overset{\mathrm{def}}{=}v-u$, $s_{2}%
\overset{\mathrm{def}}{=}2v$ $\ $and $s_{3}=2v-u$.

If $s=mu+nv$ with $m,n\in\mathbb{Z}$, $m,n\geq1$ (zone A in Figure \ref{F1}) then we can choose $w=h$, for the angle $\measuredangle xhs$ is obtuse.
Similarly, if $s=mu+nv$ with $m,n$ negative integers (zone A') then we can choose $w=o$.

Since, by (\ref{1}), $b=||v||<||v-u||$, $o$ (and hence $P$) lies on the right hand side of the bisector of the line-segment $\left[  v,s_{1}\right]  $. 
It follows that we can choose $w=v$ for any $s=s_{1}-ku$, $k\geq0$ (zone B). 
By symmetry, one can choose $w=u$ for $s=u+ku$, $k\geq1$ (zone B').

Still by (\ref{1}), $a=||u||=||h-v||<||h-s_{2}||$, hence $h$ is separated from $s_{2}$ by the mediator $m_{2}$ of the line-segment $\left[  s_{2,}v\right]$. 
It follows that $P$ lies in the lower half-plane bounded by $m_{2}$ and we can choose $w=v$ for any $s=s_{2}+kv$, $k\geq0$ (zone C). 
By symmetry, one can choose $w=u$ for $s=u-kv$, $k\geq1$ (zone C'). 
The condition (\ref{1}) also implies that $P$ lies on the right hand side of the mediator of $\left[s_{2}s_{3}\right]  $, 
whence one can also choose $w=v$ for any $s=s_{3}-ku$, $k\geq0$ (zone D). 
By symmetry one can chose $w=u$ for $s=ku-v$, $k\geq1$ (zone D').

Assume now that $s=ku+k^{\prime}v$, $k^{\prime}\geq3$, $k\in\mathbb{Z}$ (zone H). 
On the one hand, by (\ref{1}), $2b\sin\alpha>b$ and the vertical distance between $P$ and $s$ is at least $b$; hence $d\left(  x,s\right)  >b$ for any $x\in P$. 
On the other hand, the distance from any point of $P$ to the nearest vertex is less than $\frac{a}{2}+\frac{b}{2}$, which is less than $b$. 
Hence the property also holds for those vertices, and by symmetry, for $s=$ $ku-k^{\prime}v$, $k^{\prime}\geq2$, $k\in\mathbb{Z}$.

\begin{figure}[ptb]
\centering\includegraphics[width=.5\textwidth]{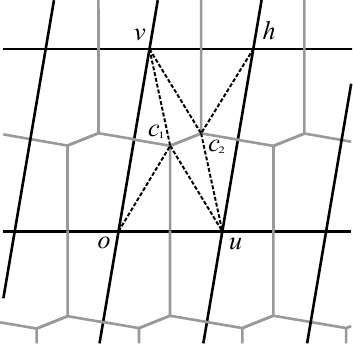} \caption{Voronoi
diagram of the lattice $\mathbb{Z}u+\mathbb{Z}v$.}%
\label{F2}%
\end{figure}

Now, it is easy to check that the vertices of the Voronoi diagram (gray in Figure \ref{F2}) are the circumcenters $c_{1}$ and $c_{2}$ of $\left(o,u,v\right)  $ and $\left(  u,v,h\right)$. 
Obviously the farthest points
from $p=\phi\left(  o\right)  =\phi\left(  u\right)  =\phi\left(  v\right)
=\phi\left(  h\right)  $ are $\phi\left(  c_{1}\right)  $ and $\phi\left(
c_{2}\right)  $, and the segments joining them to $p$ are the images by $\phi$
of the radii (dashed line in Figure \ref{F2}).
\end{proof}

%%%%%%%%%%%%%%%%%%%%%%%%%%%%%%%%%%%%%%%%%%%%%%%%%%%%%

\section{Flat Klein bottles}

Let $K_{a,b}$ be the Klein bottle obtained from an $a\times b$ rectangle by
identifying the points of the $a$-long sides parallelly, and the points of the
$b$-long sides by the symmetry with respect to the center of the rectangle,
$a,b\in\mathbb{R}_{+}^{\ast}$. The direction of the $a$-long sides will be
called \emph{horizontal}, while the direction of the $b$-long sides will be
called \emph{vertical}. The horizontal and vertical geodesics of $K_{a,b}$ are
closed. All horizontal geodesics have length $2a$, excepting the one
corresponding to the sides of the rectangle, and the one corresponding to the
mid-height horizontal segment, which both have length $a$; we call them the
\emph{main geodesics} of $K_{a,b}$.

\begin{theorem}
Any flat Klein bottle is isometric to some $K_{a,b}$.
\end{theorem}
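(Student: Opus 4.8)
The plan is to mimic the structure of the proof of the torus classification (Theorem 1), but to exploit non-orientability to force the fundamental domain to be a rectangle rather than a general parallelogram. Let $K$ be a flat Klein bottle. As before, I would start with a shortest non-contractible simple closed geodesic $\gamma_0$ of length $a$, and thicken it to a flat strip (tubular neighborhood) $N_\varepsilon$, growing $\varepsilon$ until the strip first self-overlaps at some critical value $c$. The two boundary geodesics $\beta^0_\varepsilon,\beta^1_\varepsilon$ meet at the critical instant, and as in the earlier proof there is a sign $s=\pm1$ with $\dot\beta^{u_0}_c(t_0)=s\,\dot\beta^{u_1}_c(t_1)$, since otherwise the boundary curves would have collided at a smaller $\varepsilon$.

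The decisive difference from the orientable case lies in how the boundary of the strip is glued. In Theorem 1 orientability ruled out the M\"{o}bius-band gluing ($u_0=u_1$) and forced a genuine parallelogram with a shear parameter $d$; here the \emph{non}-orientability of $K$ should instead force the shear to vanish, yielding a rectangle. Concretely, I would argue that the strip $N_c$ is either a cylinder or a M\"{o}bius band. If one tried to close $K$ as a cylinder with boundary identification $\beta^1_c(t)=\beta^0_c(\pm t+d)$, the orientability/non-orientability bookkeeping from the torus proof reverses the admissible sign: non-orientability of $K$ forces the identification to reverse orientation, so the sign is minus rather than plus. The reflection $t\mapsto -t+d$ has two fixed points on $\mathbb{R}/a\mathbb{Z}$, and these fixed points are exactly where the gluing is by a point-reflection; after recentering one sees that $d$ can be normalized to $0$, so the two boundary circles are glued straight across by the central symmetry, producing an $a\times(2c)$ rectangle with the Klein-bottle identifications described just before the statement.

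Thus the key steps, in order, are: (i) select a shortest non-contractible simple closed geodesic and build the maximal embedded flat strip around it; (ii) extract the velocity-matching relation at the first self-overlap; (iii) classify the resulting gluing of the strip boundary, using non-orientability to eliminate the sheared cylinder and to pin down the orientation-reversing identification; and (iv) read off the rectangular fundamental domain and verify it realizes $K_{a,b}$ with $b\eqd 2c$. I expect step (iii) to be the main obstacle: one must rule out a genuinely sheared gluing and show the reflection parameter can always be normalized away, which is where the topological input (non-orientability, and the structure of $\pi_1$ of the Klein bottle as $\langle x,y\mid xyx^{-1}=y^{-1}\rangle$) does the real work. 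Unlike the torus case, no side-length inequality survives, since the horizontal direction was chosen as the shortest geodesic but the vertical side $b=2c$ is otherwise unconstrained; so the statement is simply that \emph{every} flat Klein bottle is some $K_{a,b}$, with no normalization inequality attached.
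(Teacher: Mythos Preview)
Your plan diverges from the paper's at the very first step, and that divergence is where the gap lies. The paper does \emph{not} begin with the shortest non-contractible closed geodesic; it begins with a closed curve whose tubular neighbourhood is a M\"obius strip (such curves exist precisely because $K$ is non-orientable), and then minimises length within its homotopy class. The resulting geodesic $\gamma_0$ is therefore one-sided by construction, its $\varepsilon$-strip is a flat M\"obius band, and the boundary is a \emph{single} closed geodesic $\beta_\varepsilon:\mathbb{R}/2a\mathbb{Z}\to K$ of length $2a$. At the critical width $b/2$ this one circle self-identifies, yielding a fixed-point-free involution $\phi$ of $\mathbb{R}/2a\mathbb{Z}$ with $\phi(t_0+s)=t_1\pm s$; the absence of fixed points kills the minus sign, and $\phi\circ\phi=\mathrm{id}$ forces $t_1-t_0=a$. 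The rectangle drops out directly, with no shear parameter ever entering the picture.

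Your approach instead starts from the shortest non-contractible geodesic and posits two boundary curves $\beta^0_\varepsilon,\beta^1_\varepsilon$, mimicking the torus proof. But on a Klein bottle the shortest non-contractible geodesic may itself be one-sided --- for instance a main geodesic of $K_{a,b}$ when $a<b$ --- and then its strip is a M\"obius band with only one boundary component, so the two-curve setup never gets off the ground. You acknowledge in passing that the strip could be a M\"obius band, yet your actual argument (orientation-reversing gluing of two boundary circles, normalising the reflection parameter $d$ to zero) treats only the cylinder case. Even within that case you still owe the reader the elimination of the sub-case $u_0=u_1$ (one boundary circle self-overlapping before meeting the other); this can be done by noting it would produce a one-sided closed geodesic of length $a/2<a$, contradicting minimality, but you do not say so. Your outline is repairable by splitting into the one-sided and two-sided cases and running the paper's M\"obius argument in the former, but the cleaner route --- and the one the paper takes --- is to use non-orientability at the outset to choose $\gamma_0$ one-sided, rather than at the end to constrain a gluing.
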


\begin{proof}
Let $K$ be a flat Klein bottle. Consider a closed curve $\gamma$ whose $\varepsilon$-strip is a M\"{o}bius strip, for some $\varepsilon>0$. 
Let $\gamma_{0}$ be a shortest closed curve homotopic to $\gamma$; $\gamma_{0}$ has the same property. 
Moreover, $\gamma_{0}$ is a geodesic; denote by $a$ its length. 
For $\varepsilon>0$ small enough, an $\varepsilon$-strip of $\gamma_0$ is a flat M\"{o}bius strip, 
that is, an $a\times\varepsilon$ rectangle with the $\varepsilon$-long sides identified. 
The boundary of the strip is a closed geodesic of length $2a$, say $\beta_{\varepsilon}:\mathbb{R}/2a\mathbb{Z} \rightarrow K$. 
Since $K$ has finite area, for greater $\varepsilon$, the strip must self overlap; let $b/2$ be the greatest $\varepsilon$ such that it does not. 
There are distinct points $t_{0}$, $t_{1}\in\mathbb{R}/2a\mathbb{Z}$ such that $\beta_{b/2}\left(  t_{0}\right)  =\beta_{b/2}\left(  t_{1}\right) $. 
Moreover
\begin{equation}
\dot{\beta}_{b/2}\left(  t_{0}\right)  =\pm\dot{\beta}_{b/2}\left(
t_{1}\right)  \text{,} \label{1.1}%
\end{equation}
for otherwise $\beta_{\varepsilon}$ would have self-intersected for lower
$\varepsilon$. It follows that the lowest $\varepsilon$ such that
$\beta_{\varepsilon}\left(  t\right)  $ has more that one preimage in
$\mathbb{R}/2a\mathbb{Z}$ does not depend on $t$ and is $b/2$. This defines an
involution $\phi:\mathbb{R}/2a\mathbb{Z\rightarrow R}/2a\mathbb{Z}$, without
fixed points, such that $\beta_{b/2}\left(  t\right)  =\beta_{b/2}\left(
\phi\left(  t\right)  \right)  $. Moreover, from (\ref{1.1}), $\phi\left(
t_{0}+s\right)  =t_{1}\pm s$. Since $\phi$ has no fixed points, the minus case
cannot occur. Since $\phi$ is an involution, $t_{1}-t_{0}=a$, and the proof is complete.
\end{proof}

\bigskip

The circle $\mathbb{R}/a \mathbb{Z}$ acts isometrically on $K_{b}$ by horizontal translation. Other important isometries are the \textquotedblleft
reflections\textquotedblright\ with respect to the main geodesics, and the reflection with respect to a horizontal geodesic which exchanges the main geodesics (\ie,
$\beta_{b/2}$ in the above proof). It follows that any geometrical property attached to a point only depends on its distance $y\in\left[  0,b/4\right]  $
to the nearest main geodesic.

\begin{theorem}
Let $p$ be a point of the flat Klein bottle $K_{a,b}$. Denote by $\sigma$ the
union of its main geodesics and put $\lambda=2d\left(  p,\sigma\right)  /b$. Note that
$\lambda\in\left[  0,1/2\right]  $.

\begin{enumerate}
\item Assume $b<2a$. Then $\#F_{p}=\#F_{p}^{4}=1$ if and only if $\lambda=0$;
otherwise $\#F_{p}=\#F_{p}^{3}=2$.

\item Assume $b=2a$. Then $\#F_{p}=\#F_{p}^{4}=1$ if and only if $\lambda=0$
or $\lambda=1/2$; otherwise $\#F_{p}=\#F_{p}^{3}=2$.

\item Assume $b>2a$ and put $\lambda_{0}=\frac{1}{2}-\sqrt{\frac{1}{4}%
-\frac{a^{2}}{b^{2}}}$.

\begin{enumerate}
\item If $\lambda=0$ then $\#F_{p}=\#F_{p}^{4}=1$.

\item If $0<\lambda<\lambda_{0}$ then $\#F_{p}=\#F_{p}^{3}=2$.

\item If $\lambda=\lambda_{0}$ then $\#F_{p}=\#F_{p}^{4}=1$.

\item If $\lambda_{0}<\lambda<1/2$ then $\#F_{p}=\#F_{p}^{3}=1$.

\item If $\lambda=1/2$ then $\#F_{p}=\#F_{p}^{3}=2$.
\end{enumerate}
\end{enumerate}
\end{theorem}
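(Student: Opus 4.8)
The plan is to realise $K_{a,b}$ as $\mathbb{E}/\Lambda$, where $\Lambda$ is generated by the translation $T\colon(x,y)\mapsto(x,y+b)$ and the glide reflection $g\colon(x,y)\mapsto(x+a,b-y)$ (so $g^{2}$ is the horizontal translation by $2a$), and then to read off the farthest points from the Voronoi diagram of the orbit $S=\Lambda\tilde p$, exactly as in the torus case. Writing $\phi\colon\mathbb{E}\to K_{a,b}$ for the canonical projection and using the symmetries recalled before the statement, I may take $\tilde p=(0,y_{0})$ with $y_{0}=\lambda b/2\in[0,b/4]$, so that $d(p,\sigma)=y_{0}$. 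The orbit then splits into two translates of the rectangular lattice $2a\mathbb{Z}\times b\mathbb{Z}$: the even sites $(2ma,y_{0}+nb)$ and the odd sites $((2m+1)a,-y_{0}+nb)$. Since $C(p)$ is the image of the Voronoi diagram of $S$, every farthest point is the image of a vertex of that diagram, and the number of segments joining it to $p$ equals the number of sites nearest to the corresponding lift. Because $K\setminus C(p)$ is a single open cell and $\chi(K)=0$, an Euler-characteristic count gives $V-E+1=0$ for the graph $C(p)$, so a trivalent $C(p)$ has exactly two vertices; away from finitely many degenerate parameters the theorem thus reduces to locating these two vertices and comparing their distances to $S$.

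To find them, set $A=(0,y_{0})$, $B=g\tilde p=(a,b-y_{0})$, $C=(0,y_{0}+b)$, $D=(a,-y_{0})$ and $A'=(2a,y_{0})$. The centre $(a/2,b/2)$ is equidistant from $A$ and $B$ and is the fixed point of the half-turn $\rho\colon(x,y)\mapsto(a-x,b-y)$, which preserves $S$ but does \emph{not} lie in $\Lambda$. Following the bisector of $A,B$ one reaches the circumcentre of $A,B,D$, namely $V_{1}=(a/2+\delta,\,b/2-y_{0})$ with $\delta\eqd y_{0}(b-2y_{0})/a$, together with its mirror image $V_{2}=\rho V_{1}$, the circumcentre of $A,B,C$. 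These are the two vertices as long as $V_{1}$ stays inside the strip $0<x<a$, i.e. as long as $\delta<a/2$. The decisive identity is that $\delta=a/2$ is equivalent to $y_{0}(b-2y_{0})=a^{2}/2$, hence in the variable $\lambda=2y_{0}/b$ to $\lambda(1-\lambda)=a^{2}/b^{2}$, whose relevant root is precisely $\lambda_{0}=\tfrac12-\sqrt{\tfrac14-a^{2}/b^{2}}$. This root exists iff $b\ge2a$; moreover $\delta$ increases on $[0,b/4]$ with maximum $b^{2}/(8a)$, so $\delta<a/2$ holds throughout iff $b<2a$. This trichotomy is exactly the one in the statement.

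In the regime $\delta<a/2$ one verifies that $V_{1},V_{2}$ are genuine trivalent vertices at equal distance from $S$ (as $\rho$ preserves $S$), and that $\phi(V_{1})=\phi(V_{2})$ holds only when $\delta=a/2$; hence for $0<\delta<a/2$ they give two distinct farthest points, each joined to $p$ by three segments, so $\#F_{p}=\#F_{p}^{3}=2$. This is the two-point conclusion of parts 1, 2 and 3(b). At the degenerate parameters $y_{0}=0$ and $\delta=a/2$ the four sites $A,B,C,D$, respectively $A,B,D,A'$, become equidistant; $V_{1}$ and $V_{2}$ are $g$-related and project to a single $4$-valent vertex, giving $\#F_{p}=\#F_{p}^{4}=1$ — this yields the $\lambda=0$ statements, the $\lambda=1/2$ statement when $b=2a$, and case 3(c). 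Finally, when $\delta>a/2$ the point $V_{1}$ leaves the strip, the combinatorial type of $C(p)$ changes, and the two vertices become the circumcentre $\tilde V_{1}=(a,\,b/2-a^{2}/(2(b-2y_{0})))$ of $A,B,A'$ and the strictly nearer circumcentre $W_{1}=(a,\,a^{2}/(4y_{0}))$ of $A,A',D$. Because $g\rho\,\tilde V_{1}=\tilde V_{1}$, the lifts $\tilde V_{1}$ and $\rho\tilde V_{1}$ represent the same point of $K$, so there is a single farthest point, joined to $p$ by the three segments to $A,B,A'$; this is case 3(d).

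It remains to treat $\lambda=1/2$ and to justify the distance comparison. When $y_{0}=b/4$ the map $r\colon(x,y)\mapsto(x,b/2-y)$ — the reflection exchanging the two main geodesics — fixes $p$ and preserves $S$ but is not in $\Lambda$. If $b\le2a$ then $b/2-y_{0}=y_{0}$, the relevant vertices lie on the axis $y=b/4$ and are fixed by $r$, so no extra farthest point appears; if $b>2a$ then $\tilde V_{1}$ and $W_{1}$ lie off this axis and are interchanged by $r$, which forces their distances to $S$ to be equal and hence produces two farthest points with three segments each, i.e. case 3(e). The single genuinely computational step, and the one I expect to be the main obstacle, is the comparison of the two circumradii $R_{\tilde V_{1}}=\tfrac b2-y_{0}+\dfrac{a^{2}}{2(b-2y_{0})}$ and $R_{W_{1}}=y_{0}+\dfrac{a^{2}}{4y_{0}}$ on the range $\lambda_{0}<\lambda<1/2$: one must show $R_{\tilde V_{1}}>R_{W_{1}}$ strictly there, with equality at $\lambda=1/2$ (where it is forced by $r$). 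This is a one-variable inequality in $y_{0}$; granting it, together with the routine verification that the nearest sites are the ones named above and that $\Lambda$ identifies the vertices as claimed, all five subcases of part 3 — and parts 1 and 2 — follow.
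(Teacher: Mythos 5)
Your plan is the paper's own proof in different clothing: realize $K_{a,b}$ as $\mathbb{E}/\Lambda$, take the Voronoi diagram of the orbit $S$ of a lift of $p$, and decide everything by locating its vertices among circumcenters of nearby sites. Your $V_{1},V_{2},\tilde V_{1},W_{1}$ are, up to the choice of fundamental domain, the paper's $c^{+},c^{-},c_{1},c_{0}$, and your threshold $\delta=a/2$, i.e. $2y_{0}(b-2y_{0})=a^{2}$, is exactly the vanishing of the paper's discriminant $\Delta=a^{2}-b^{2}\lambda(1-\lambda)$; the resulting trichotomy, the $\Lambda$-identifications of the vertices, and the valence counts all agree with the paper. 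Your Euler-characteristic count and the symmetry arguments (the half-turn $\rho$, and the reflection $r$ at $\lambda=1/2$) are pleasant substitutes for some of the paper's direct computations.

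However, as written the proposal stops short at precisely the two places where the paper does its actual work, so it is a correct plan rather than a complete proof. First, you never verify that the circumcenters you name are genuine vertices of the Voronoi diagram of the \emph{infinite} set $S$, i.e. that no remote site is closer; the paper spends a full paragraph on exactly this (its ``kite'' claim, proved by an obtuse-angle case analysis), and your ``routine verification that the nearest sites are the ones named'' is that missing argument, not a triviality one can wave at. Second, and more seriously, the comparison $R_{\tilde V_{1}}>R_{W_{1}}$ on $\lambda_{0}<\lambda<1/2$ --- the only thing separating case 3(d), with its single farthest point, from a two-point answer --- is explicitly assumed (``granting it''). The paper proves the corresponding fact by computing $d(p_{0},c_{0})^{2}-d(v,c_{1})^{2}=(1-2\lambda)\Delta\bigl(a^{2}+b^{2}(1-\lambda)\lambda\bigr)/\bigl(4b^{2}(1-\lambda)^{2}\lambda^{2}\bigr)$. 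In your variables the gap closes in two lines: setting $t=2y_{0}$,
\[
R_{\tilde V_{1}}-R_{W_{1}}=\left(\frac{b}{2}-t\right)\left(1-\frac{a^{2}}{t(b-t)}\right)\text{,}
\]
and in the regime $\delta>a/2$ one has $t(b-t)=2a\delta>a^{2}$, so the second factor is positive and the sign of the difference is that of $b/2-t$: strictly positive for $\lambda<1/2$ and zero exactly at $\lambda=1/2$, which is what cases 3(d) and 3(e) require. With this identity, and with the non-interference check actually carried out, your argument becomes a correct proof along the same lines as the paper's.
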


\begin{proof}
$K_{a,b}$ is isometric to the standard plane $\mathbb{R}^{2}$ quotiented by
the group of affine isometries generated by the translation of vector $(0,b)$
and the glide reflection of vector $(0,a)$ whose axis is the $x$-axis. Let
$\phi:\mathbb{R}^{2}\rightarrow K_{a,b}$ be the canonical surjection. The
main geodesics (dot lines in Figure \ref{F3}) are the image by $\phi$ of the lines of
equations $y=nb/2$, $n\in\mathbb{Z}$. By symmetry, one can assume without loss
of generality that $p=\phi(p_{0})$ with $p_{0}=\left(  0,-\xi\right)  $,
$\xi\in\lbrack0,b/4]$ and set $\lambda=2\xi/b$. Put $S_{0}\overset
{\mathrm{def}}{=}p_{0}+(2a,0)\mathbb{Z}+(0,b)\mathbb{Z}\subset\mathbb{R}^{2}$
and $S_{1}=S_{0}+(a,-2\xi)$ (black and gray dots respectively in Figure
\ref{F3}) . We have
\[
\phi^{-1}(p)=S_{0}\cup S_{1}\overset{\mathrm{def}}{=}S\text{,}%
\]
and the cut locus of $p$ is the image under $\phi$ of the Voronoi diagram of
$S$.

If $\xi=0$, the Voronoi diagram is a regular rectangle tiling of the plane,
and the conclusion follows immediately. So we may assume, from now on, that
$\xi$ is positive.

Line segments from $x\in S_{0}$ to $x+(\pm a,b-2\xi)$ and from $x$ to $x+(\pm
a,-2\xi)$ draw a regular tiling of the plane whose vertices are the points of
$S$ and the tile is kite shaped.

We claim that the restriction to a kite $K$ of the Voronoi diagram of $S$ is
the same as the Voronoi diagram of the four vertices of $K$. For this, it is
sufficient to prove that for any point $x\in K$ and any $s\in S$ which is not
a vertex of $K$, there is a vertex $w$ of $K$ such that $d(s,x)>d(v,x)$.
Choose, for instance, the kite $K$ whose vertices are $p_{0}$, $v\overset
{\mathrm{def}}{=}(0,b-\xi)$ and $h^{\pm}=(\pm a,\xi)$. Let $u=(0,\xi)$ be the
intersection of its diagonals. If $s$ belongs to the positive half of the
ordinates axis, then we can choose $w=v$, for the angle $\measuredangle
xv^{+}s$ is obtuse. Similarly, one can chose $v=p_{0}$ if $s$ belongs to the
negative part of the same axis and $v=h^{\pm}$ if $x$ belongs to the line
$h^{-}h^{+}$. Now assume that $s$ belongs to the open quarter of plane bounded
by half-lines starting at $u$\ through$\ h^{+}$ and $v$. With $q\overset
{\mathrm{def}}{=}(a,b-\xi)$, it is easy to see that on the one hand
$\min\left(  d(x,v),d(x,h^{+})\right)  \leq d(x,q)$, and on the other hand
$\measuredangle xqs>\frac{\pi}{2}$, whence
\[
\min(d(x,v),d(x,h^{+}))<d(x,s)\text{.}%
\]
The cases of the three other quarters of plane are totally similar, so the
claim is proved.

\begin{figure}[ptb]
\centering\includegraphics[width=1.0\textwidth]{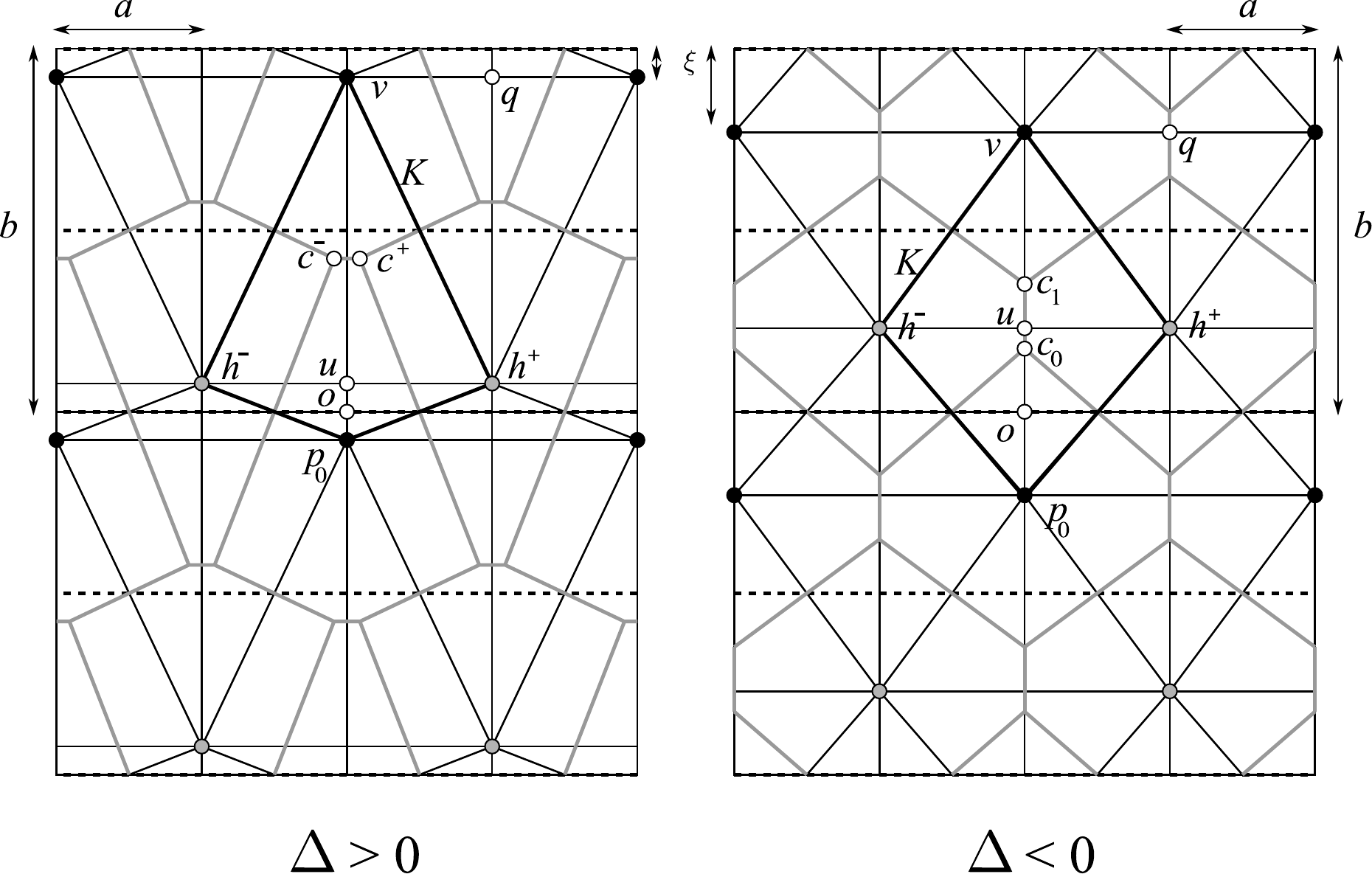} \caption{Tiling
(black lines) of the plane generated by a flat Klein bottle $K_{a,b}$ with the
Voronoi diagram (gray lines) of the preimage of a point $p$.}%
\label{F3}%
\end{figure}

Denote by $\omega\left(  u,v,w\right)  $ the circumcenter of the triangle
determined by $u$, $v$, $w$, where $w$ is a variable point in the plane. Put
$\Delta\overset{\mathrm{def}}{=}a^{2}-b^{2}\lambda(1-\lambda)$. By a
straightforward computation, we have%
\begin{align*}
c^{\pm}\overset{\mathrm{def}}{=}\omega\left(  p_{0},v,h^{\pm}\right)   &
=\left(  \pm\frac{\Delta}{2a},\frac{b}{2}\left(  1-\lambda\right)  \right)
\text{,}\\
c_{0}\overset{\mathrm{def}}{=}\omega\left(  p_{0},h^{+},h^{-}\right)   &
=\left(  0,\frac{a^{2}}{2b\lambda}\right)  \text{,}\\
c_{1}\overset{\mathrm{def}}{=}\omega\left(  v,h^{-},h^{+}\right)   &  =\left(
0,\frac{b^{2}\left(  1-\lambda\right)  ^{2}-\Delta}{2b\left(  1-\lambda
\right)  }\right)  \text{.}%
\end{align*}
It follows that
\begin{align}
d\left(  c_{0},v\right)  ^{2}-d\left(  c_{0},p_{0}\right)  ^{2}  &
=-\frac{\Delta}{\lambda}\text{,}\label{10}\\
d\left(  c^{+},h^{-}\right)  ^{2}-d\left(  c^{+},p_{0}\right)  ^{2}  &
=2\Delta\text{,}\label{11}\\
d(c_{1},v)^{2}-d(c_{1},p_{0})^{2}  &  =\frac{\Delta}{1-\lambda}\text{.}
\label{12}%
\end{align}
We have to discuss three cases, according the sign of $\Delta$.

\textbf{Case} $\Delta>0$. Note that this is the only possible case if $b\leq2a$. From (\ref{10}),
\[
d\left(  c_{0},v\right)  <d\left(  c_{0},p_{0}\right)  =d\left(  c_{0}%
,h^{+}\right)  =d\left(  c_{0},h^{-}\right)  \text{,}%
\]
whence $c_{0}$ belongs to the open Voronoi cell of $v$ and, consequently, is
not a vertex of the Voronoi diagram. Similarly, by (\ref{12}),
\[
d\left(  c_{1},p_{0}\right)  <d\left(  c_{1},v\right)  =d\left(  c_{1}%
,h^{+}\right)  =d\left(  c_{1},h^{-}\right)  \text{,}%
\]
so $c_{1}$ belongs to the open Voronoi cell of $o$. It follows that the
vertices of the restriction to $K$ of the Voronoi diagram of $S$ are precisely
$c^{+}$ and $c^{-}$. The whole Voronoi diagram is shown in Figure \ref{F3} as
a gray line; $\phi\left(  c^{+}\right)  $ and $\phi\left(  c^{-}\right)  $ are
the only points of ramification of the cut-locus of $p$. Due to the vertical
axis symmetry,
\[
d\left(  \phi\left(  c^{+}\right)  ,p\right)  =d\left(  p_{0},c^{+}\right)
=d\left(  p_{0},c^{-}\right)  =d\left(  \phi\left(  c^{-}\right)  ,p\right)
\text{,}%
\]
whence $\phi\left(  c^{+}\right)  $ and $\phi\left(  c^{-}\right)  $ are both
farthest points from $p$. The points $c^{\pm}$ are of degree three in the
Voronoi diagram, whence $\phi\left(  c^{\pm}\right)  \in F_{p}^{3}$.

\textbf{Case} $\Delta<0$. By (\ref{11}),%
\[
d\left(  c^{+},h^{-}\right)  <d\left(  c^{+},p_{0}\right)  =d\left(
c^{+},h^{+}\right)  =d\left(  c^{+},v\right)  \text{,}%
\]
whence $c^{+}$ does not belong to the cell of $h^{+}$, and consequently, it is
not a vertex of the Voronoi diagram. The same argument holds for $c^{-}$.
Moreover, by (\ref{10}) and (\ref{12}),
\begin{align*}
d\left(  c_{0},v\right)   &  >d\left(  c_{0},o\right)  =d\left(  c_{0}%
,h^{+}\right)  =d\left(  c_{0},h^{-}\right)  \text{,}\\
d\left(  c_{1},o\right)   &  >d\left(  c_{1},v\right)  =d\left(  c_{1}%
,h^{+}\right)  =d\left(  c_{1},h^{-}\right)  \text{,}%
\end{align*}
whence $c_{0}$ and $c_{1}$ are the actual vertices of the Voronoi diagram. The
whole Voronoi diagram is shown in Figure \ref{F3}; $\phi\left(  c^{+}\right)
$ and $\phi\left(  c^{-}\right)  $ are the only points of ramification of the
cut-locus of $p$. A simple computation shows that%
\begin{align*}
d\left(  p,\phi\left(  c_{0}\right)  \right)  ^{2}-d\left(  p,\phi\left(
c_{1}\right)  \right)  ^{2}  &  =d\left(  p_{0},c_{0}\right)  ^{2}-d\left(
v,c_{1}\right)  ^{2}\\
&  =\frac{\left(  1-2\lambda\right)  \Delta\left(  a^{2}+b^{2}\left(
1-\lambda\right)  \lambda\right)  }{4b^{2}\left(  1-\lambda\right)
^{2}\lambda^{2}}%
\end{align*}
is non-positive and vanishes if and only if $\lambda=1/2$. It follows that,
for $\lambda<1/2$, $F_{p}=F_{p}^{3}$ $=\left\{  \phi\left(  c_{1}\right)
\right\}  $, and for $\lambda=1/2$, $F_{p}=F_{p}^{3}$ $=\left\{  \phi\left(
c_{0}\right)  ,\phi\left(  c_{1}\right)  \right\}  $.

\textbf{Case} $\Delta=0$. This case occurs only if $b\geq2a$ and corresponds
to a single value of $\lambda\in]0,1/2]$, namely
\[
\lambda=\frac{1}{2}-\sqrt{\frac{1}{4}-\frac{a^{2}}{b^{2}}\text{.}}%
\]
In this case $c^{+}=c^{-}=c_{0}=c_{1}$. Hence $F_{p}^{4}=F_{p}=\{\phi\left(
c_{0}\right)  \}$.
\end{proof}

%%%%%%%%%%%%%%%%%%%%%%%%%%%%%%%%%%%%%%%%%%%%%

\end{document}